\documentclass[10pt,reqno]{amsart}

\usepackage[textwidth=5in,textheight=8in,centering]{geometry}
\usepackage[T1]{fontenc}
\usepackage{lmodern}
\usepackage{microtype}
\usepackage{flafter}
\usepackage{amsmath,amssymb}
\usepackage{xcolor}
\usepackage{tikz}
\usetikzlibrary{arrows.meta,decorations.pathreplacing}
\usepackage{hyperref}
\definecolor{brickblue}{RGB}{46,88,132}
\definecolor{brickorange}{RGB}{180,111,55}
\tikzset{every picture/.style={line width=.55pt}}
\hypersetup{colorlinks=true,linkcolor=brickblue,citecolor=brickblue,
 urlcolor=brickblue,pdftitle={Fourfold Is Enough: Connected LEGO Models from One Brick Type},
 pdfauthor={Joshua S. Gans}}
\newtheorem{theorem}{Theorem}
\newtheorem{lemma}[theorem]{Lemma}
\title[Fourfold Is Enough]{Fourfold Is Enough:\\
Connected LEGO Models from One Brick Type}
\author{Joshua S. Gans}
\address{Rotman School of Management, University of Toronto, Toronto, Canada}
\date{}
\subjclass[2020]{05B45, 05C70, 52C20}
\keywords{LEGO bricks, tilings, dilation, connectivity, checkerboard colouring}

\begin{document}
\begin{abstract}
Can a LEGO model built from many kinds of rectangular bricks be rebuilt
using only the familiar $2\times4$ brick? Enlarging every dimension by
four makes the pieces fit, but they must also connect. We give a
four-layer arrangement that joins every finite face-connected union of
lattice cubes, using eight upright bricks per source cube, and show that
no smaller universal enlargement works. A second question has an
unexpected answer: if connectivity is set aside, the minimum integer
scale is always $1$, $2$, or $4$, never $3$. The missing value follows
from a checkerboard argument applied separately inside each enlarged cell.
\end{abstract}
\maketitle

\section{One brick, any shape?}

A LEGO model built from rectangular bricks draws on a collection of pieces: small bricks
for narrow details, long bricks for walls, and larger pieces to tie
sections together. Suppose we replace this collection with a supply of
just the familiar $2\times4$ brick. Can we reproduce the same shape?
Small details may force us to build a larger version, but could one
fixed enlargement work for every model, however intricate? The question
asks how much of a construction's variety comes from its inventory,
and how much can be recovered by rearranging one simple piece.

There are two obstacles. The bricks must fill the prescribed shape
exactly, including its recesses and holes. They must also connect:
LEGO bricks fasten above and below, while two bricks that merely meet
along their sides do not attach. Filling small parts of a model
independently can therefore leave a collection of separate assemblies.
Bridging those parts requires bricks to cross their boundaries without
protruding into the empty space around them. A construction must arrange
these bridges throughout an arbitrary three-dimensional shape.

Fourfold enlargement provides just enough room for a simple solution.
Represent the shape by unit cubes. Each becomes a $4\times4\times4$
block, which holds two upright $2\times4\times1$ bricks in each of four
layers (Figure~\ref{fig:fourfold}). To connect the result, give the layers
different jobs. Turn the bricks between the first two layers so that
the pieces within each enlarged cube attach to one another. Use the
third layer to bridge neighbouring cubes along rows, and the fourth to
bridge them along columns. Cubes above one another connect through
their common horizontal face. The task is to make these bridges cover
each layer exactly; the staggered arrangement in
Figure~\ref{fig:connections} does so.

\begin{figure}[htbp]
\centering
\begin{tikzpicture}[x=.32cm,y=.32cm,>=Latex,
 every node/.style={font=\fontsize{9}{11}\selectfont}]
 \filldraw[fill=brickblue!14] (0,.75) rectangle (2.10,2.85);
 \filldraw[fill=brickblue!25]
  (0,2.85)--(.70,3.40)--(2.80,3.40)--(2.10,2.85)--cycle;
 \filldraw[fill=brickblue!38]
  (2.10,.75)--(2.80,1.30)--(2.80,3.40)--(2.10,2.85)--cycle;
 \node at (1.05,1.80) {$1$};
 \node[above=4pt] at (1.40,3.40) {source cube};
 \node[below=4pt] at (1.40,.75) {$1\times1\times1$};

 \draw[->,line width=.75pt]
  (3.60,2.00)--(7.90,2.00)
  node[midway,above=3pt] {$s=4$};
 \node[above=5pt] at (18.95,4.00)
  {four horizontal layers of a $4\times4\times4$ block};

 \foreach \xx/\layer in {9/1,14.3/2,19.6/3,24.9/4}{
  \begin{scope}[shift={(\xx,0)}]
   \fill[brickblue!15] (0,0) rectangle (4,2);
   \fill[brickorange!25] (0,2) rectangle (4,4);
   \draw[line width=.5pt,black!38,step=1] (0,0) grid (4,4);
   \draw[line width=.7pt] (0,0) rectangle (4,4);
   \draw[line width=.8pt] (0,2)--(4,2);
   \node[below=3pt] at (2,0) {layer $\layer$};
  \end{scope}
 }
 \draw[decorate,decoration={brace,mirror,amplitude=3pt},line width=.6pt]
  (9,-1.35)--(28.9,-1.35)
  node[midway,below=6pt]
  {$2$ bricks per layer $\times$ $4$ layers $=8$ bricks};
\end{tikzpicture}
\caption{Filling the enlarged shape takes eight bricks per source cube:
two in each of four layers. Repeating this arrangement independently need
not connect adjacent cubes.}
\label{fig:fourfold}
\end{figure}

This brings together two familiar mathematical questions: filling boxes
with a fixed brick, studied by de~Bruijn~\cite{DeBruijn1969}, and
connected LEGO structures, studied by Eilers~\cite{Eilers2016}.
Contact graphs also appear in computational methods for constructing
brick sculptures~\cite{TestuzSchwartzburgPauly2013}. Here the region is
prescribed and the brick type is fixed. After proving the connected
construction, we set connectivity aside and ask when a smaller
enlargement can be tiled. The minimum integer scale is then always
one, two, or four: three is never needed.

\section{Four layers that connect everything}

We model a shape $M$ as a finite, nonempty union of unit cubes with
vertices in $\mathbb Z^3$. It is \emph{face-connected} if any two cubes
can be joined by a chain of cubes sharing faces. Write $V$ for the
number of cubes. The replacement bricks are integer translates of
$[0,4]\times[0,2]\times[0,1]$ or
$[0,2]\times[0,4]\times[0,1]$: they may turn horizontally but remain
upright. Coordinates use stud spacing horizontally and brick height
vertically; enlargement multiplies all three coordinates by the same factor.

Two replacement bricks have a \emph{contact} when they lie in consecutive
height layers and their horizontal footprints overlap in at least one
unit square. A construction is connected when any two bricks can be
joined by a chain of contacts. This idealisation records attachments;
it does not assert mechanical stability. We reproduce the occupied
region and may change the boundaries between its original pieces.

\begin{theorem}\label{thm:fourfold}
Every face-connected model $M$ has an exact fourfold enlargement made
from upright $2\times4\times1$ bricks with a connected contact graph.
The construction uses $8V$ bricks. No smaller positive real enlargement
factor works for every such model.
\end{theorem}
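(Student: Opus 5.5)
The proof has two parts. For the upper bound, I would fill each enlarged cube so that it always holds $8V$ bricks, and give each of its four height layers a separate connecting job. For the lower bound, a single cube suffices.

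Each source cube becomes $[4a,4a+4]\times[4b,4b+4]\times[4c,4c+4]$, with layers at heights $4c+j$ for $j=0,1,2,3$. In layer $0$, I place two bricks of footprint $4\times2$ inside the cell; in layer $1$, two bricks of footprint $2\times4$. Every layer-$0$ brick then overlaps both layer-$1$ bricks, so the four lower bricks of each cube form a connected piece. Layers $2$ and $3$ are not filled cube by cube.

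In layer $2$, at each height, I take a maximal run of $k$ cubes adjacent in the $x$-direction, $[4a,4a+4k]\times[4b,4b+4]$. I put a $2\times4$ brick at each end, on $x\in[4a,4a+2]$ and $x\in[4a+4k-2,4a+4k]$; when $k=1$ these two bricks fill the cell. In between, two strips of length $4k-4$ remain, namely $y\in[4b,4b+2]$ and $y\in[4b+2,4b+4]$. I tile each with $4\times2$ bricks starting at offset $2$, so that every internal cube boundary $x=4(a+i)$ is crossed by a brick. Layer $3$ gets the same construction with $x$ and $y$ exchanged, applied to maximal runs in the $y$-direction. Runs at a fixed height partition the cells, so each layer is tiled exactly and stays inside the region. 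Counting by volume, the construction uses $64V/8=8V$ bricks.

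Connectivity I would check layer by layer. Layer $1$ of every cube is covered by that cube's own bricks. Hence every layer-$2$ brick contacts the lower piece of each cube it meets. A layer-$2$ brick crossing a boundary therefore joins the lower pieces of two $x$-neighbours. Layer $2$ lies entirely above the occupied cells, so every layer-$3$ brick contacts some layer-$2$ brick. Each layer-$3$ brick crossing a $y$-boundary overlaps layer-$2$ bricks lying over both cubes, and so joins those two cubes' components. For vertically adjacent cubes, the layer-$3$ bricks of the lower cube lie directly below the layer-$0$ bricks of the upper cube, so their footprints overlap. In summary, every brick is attached to the component of some cube, and any two face-adjacent cubes have joined components. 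Face-connectivity of $M$ then gives a connected contact graph.

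The step I expect to need the most care is the bookkeeping that a crossing brick in layer $3$ really links the components of two $y$-neighbours. This holds because the bricks beneath it, whatever their arrangement, are attached to the correct cubes.

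For the lower bound, take $M$ to be a single unit cube. An exact enlargement by a factor $s$ is the box $[0,s]^3$. Any brick, in either orientation, has horizontal extent $4$ in some direction, so it fits in the box only if $s\ge4$. Hence no factor $s<4$ works for every model.
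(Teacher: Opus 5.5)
Your proposal is correct and follows the paper's proof essentially step for step. It uses the same crossed pair of layers inside each enlarged cell, the same end-bricks-plus-offset-two strip pattern on maximal rows and then on maximal columns, the same observation that any positive-area portion of an upper brick inside a cell overlaps a brick already attached to that cell, the same top-layer/bottom-layer contact for vertical neighbours, the volume count $64V/8=8V$, and the single-cube sharpness argument. One point is worth stating explicitly, as the paper does: ``meets'' and ``overlap'' must mean positive-area intersection, which for integer-coordinate footprints contains a unit square and so gives a genuine contact.
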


The arrangement in Figure~\ref{fig:connections} uses the same four
layers over every source layer. Its ingredient is a way to bridge all
the boundaries in a row of enlarged cells at once. For a row of $k$
cells, the required footprint is a $4k\times4$ strip. Put a
$2\times4$ brick footprint at each end. The remaining $4(k-1)\times4$
rectangle takes two rows of $4\times2$ footprints, starting two units
from the left edge. Their long sides span the intervals
\[
 [4j-2,4j+2],\qquad j=1,\ldots,k-1.
\]
These intervals fill the space between the end pieces, and each crosses
the source-cell boundary at $4j$. When $k=1$, the two end pieces alone
fill the square. Turning this pattern through a right angle gives the
column construction.

\begin{figure}[htbp]
\centering
\begin{tikzpicture}[x=.32cm,y=.32cm,>=Latex,
  every node/.style={font=\fontsize{9}{11}\selectfont},
  brick/.style={draw=black,line width=.65pt},
  seam/.style={black,line width=.8pt,densely dashed},
  stage/.style={draw=black,line width=.65pt,rounded corners=1.4pt,
                minimum height=6.2mm,align=center,inner xsep=2.8pt}]

  \begin{scope}[shift={(0,8)}]
    \draw[line width=.5pt,black!35,step=1] (0,0) grid (4,4);
    \filldraw[brick,fill=brickblue!15] (0,0) rectangle (4,2);
    \filldraw[brick,fill=brickorange!30] (0,2) rectangle (4,4);
    \node[above=3pt] at (2,4) {layer 1};
    \node[below=3pt] at (2,0) {east--west};
  \end{scope}
  \begin{scope}[shift={(5.5,8)}]
    \draw[line width=.5pt,black!35,step=1] (0,0) grid (4,4);
    \filldraw[brick,fill=brickblue!15] (0,0) rectangle (2,4);
    \filldraw[brick,fill=brickorange!30] (2,0) rectangle (4,4);
    \node[above=3pt] at (2,4) {layer 2};
    \node[below=3pt] at (2,0) {north--south};
  \end{scope}

  \begin{scope}[shift={(11,8)}]
    \draw[line width=.5pt,black!35,step=1] (0,0) grid (12,4);
    \filldraw[brick,fill=brickblue!30] (0,0) rectangle (2,4);
    \filldraw[brick,fill=brickblue!30] (10,0) rectangle (12,4);
    \foreach \xx in {2,6}{
      \filldraw[brick,fill=brickorange!15] (\xx,0) rectangle ++(4,2);
      \filldraw[brick,fill=brickorange!30] (\xx,2) rectangle ++(4,2);
    }
    \draw[seam] (4,-.30)--(4,4.30);
    \draw[seam] (8,-.30)--(8,4.30);
    \node[above=3pt] at (6,4.30) {layer 3: a row of three cells};
    \node[below=4pt] at (6,-.30) {both seams crossed};
  \end{scope}

  \begin{scope}[shift={(25,0)}]
    \draw[line width=.5pt,black!35,step=1] (0,0) grid (4,12);
    \filldraw[brick,fill=brickblue!30] (0,0) rectangle (4,2);
    \filldraw[brick,fill=brickblue!30] (0,10) rectangle (4,12);
    \foreach \yy in {2,6}{
      \filldraw[brick,fill=brickorange!15] (0,\yy) rectangle ++(2,4);
      \filldraw[brick,fill=brickorange!30] (2,\yy) rectangle ++(2,4);
    }
    \draw[seam] (-.30,4)--(4.30,4);
    \draw[seam] (-.30,8)--(4.30,8);
    \node[above=3pt] at (2,12) {layer 4};
  \end{scope}

  \node[stage,fill=brickorange!15,minimum width=12mm] (h) at (2,3.35)
    {layer 1};
  \node[stage,fill=black!15,minimum width=12mm] (v) at (7,3.35)
    {layer 2};
  \node[stage,fill=brickorange!15,minimum width=17mm] (hr) at (13,3.35)
    {layer 3};
  \node[stage,fill=black!15,minimum width=16mm] (vr) at (20,3.35)
    {layer 4};
  \draw[->,line width=.7pt] (h)--(v);
  \draw[->,line width=.7pt] (v)--(hr);
  \draw[->,line width=.7pt] (hr)--(vr);
  \node[align=center] at (11,1.20)
    {four target-height layers per source layer};
\end{tikzpicture}
\caption{Four layers, viewed from above. The first two turn the bricks
within each enlarged square. The third bridges every boundary along a
row; the fourth uses the rotated construction along columns. Dashed lines
mark boundaries between enlarged source cells. The arrows give the order
from bottom to top.}
\label{fig:connections}
\end{figure}

\begin{proof}
Each source cube becomes a $4\times4\times4$ block. Work through the
four brick layers over each horizontal source layer, as follows.

\emph{First connect each enlarged cell.} In the bottom layer of each
block, place two east--west bricks. In the second layer, place two
north--south bricks. Each upper brick overlaps both lower bricks in a
$2\times2$ square. These four bricks are therefore connected.

\emph{Next connect neighbouring cells in the same source layer.}
In the third brick layer, apply the strip pattern to every maximal
consecutive row of occupied source cells. These strips partition the
occupied footprint, even when it has holes or separate parts. A brick
crosses every boundary between east--west neighbours. Its portion in
each enlarged cell overlaps a second-layer brick there, so it joins the
connected groups on both sides of the boundary. Every third-layer brick
is connected to the group of each enlarged cell in which its footprint
occupies positive area.

In the fourth layer, use the turned strip pattern on maximal columns.
Any portion of a fourth-layer brick inside an enlarged cell overlaps
a third-layer brick there, which is already connected to that cell's
group. The fourth layer therefore joins every pair of north--south
neighbours. All overlaps used here have integer coordinates and positive
area, so they contain a unit square and give contacts. This connects
all bricks over each face-connected part of the source layer.

\emph{Finally connect successive source layers.} Two source cubes
sharing a horizontal face have the same enlarged square footprint.
Choose a unit square in that footprint. The top brick layer of the
lower block and the bottom brick layer of the upper block each contain
a brick covering this square. Those bricks have a contact. We have
therefore connected across every face adjacency of the source; since
the source is face-connected, the whole replacement is connected.

The enlarged region has volume $64V$, and each brick has volume $8$,
so the construction uses $8V$ bricks. For sharpness, take a single
source cube. Enlarging it by any real factor less than four leaves both
horizontal sides shorter than the long side of a target brick.
It cannot contain even one such brick.
\end{proof}

The construction also gives a simple placement order when every occupied
source column extends without gaps down to ground level. Within a source layer,
all four brick layers cover the same footprint; between source layers,
the upper footprint is contained in the lower one. Thus every brick
above ground overlaps a brick immediately below it. Placing bricks in
increasing order of height gives every nonground brick an already placed
lower contact. This condition concerns support by contact, without
claiming stability or connectedness at every intermediate stage.

\section{Why the minimum scale is never three}

Four is a universal answer, but some models need less enlargement.
For this section, ask only whether the shape can be tiled, without
requiring the resulting bricks to connect. Restrict the enlargement
factor to a positive integer, so the source lattice maps into the brick
lattice. At scale one, a $4\times2\times1$ block already is a brick;
at scale two, a $2\times1\times1$ block becomes two stacked bricks;
a single cube needs scale four (Figure~\ref{fig:scales}). Surprisingly,
these are all the possible minimum integer scales.

Because every brick has height one, we may examine horizontal layers
separately. Write $P$ for the union of unit squares in a source layer.
At integer scale $s$, that layer produces $s$ identical brick layers,
each with footprint $sP$. Consequently, the scaled model can be tiled
exactly when each of its scaled source footprints can be tiled by
$2\times4$ rectangles, allowing both orientations.

\begin{figure}[htbp]
\centering
\begin{tikzpicture}[x=.37cm,y=.37cm,>=Latex,
 every node/.style={font=\fontsize{9}{11}\selectfont}]
 \draw[black!35,line width=.5pt] (9.5,-1.40)--(9.5,8.35);
 \draw[black!35,line width=.5pt] (20.5,-1.40)--(20.5,8.35);

 \node at (4,8.05) {minimum scale $1$};
 \fill[brickblue!15] (2,5) rectangle (6,7);
 \draw[line width=.5pt,black!40,step=1] (2,5) grid (6,7);
 \draw[line width=.7pt] (2,5) rectangle (6,7);
 \draw[->,line width=.7pt]
  (4,4.70)--(4,3.35) node[midway,right=3pt] {$s=1$};
 \fill[brickorange!25] (2,1) rectangle (6,3);
 \draw[line width=.5pt,black!40,step=1] (2,1) grid (6,3);
 \draw[line width=.75pt] (2,1) rectangle (6,3);
 \node at (4,-.75) {$1$ layer, $1$ brick};

 \node at (15,8.05) {minimum scale $2$};
 \fill[brickblue!15] (14,5.5) rectangle (16,6.5);
 \draw[line width=.5pt,black!40] (15,5.5)--(15,6.5);
 \draw[line width=.7pt] (14,5.5) rectangle (16,6.5);
 \draw[->,line width=.7pt]
  (15,5.15)--(15,3.35) node[midway,right=3pt] {$s=2$};
 \fill[brickorange!25] (13,1) rectangle (17,3);
 \draw[line width=.5pt,black!40,step=1] (13,1) grid (17,3);
 \draw[line width=.75pt] (13,1) rectangle (17,3);
 \node at (15,-.75) {$2$ layers, $2$ bricks};

 \node at (26,8.05) {minimum scale $4$};
 \filldraw[fill=brickblue!15,line width=.7pt]
  (25.5,5.5) rectangle (26.5,6.5);
 \draw[->,line width=.7pt]
  (26,5.15)--(26,4.35) node[midway,right=3pt] {$s=4$};
 \fill[black!10] (24,0) rectangle (28,2);
 \fill[black!23] (24,2) rectangle (28,4);
 \draw[line width=.5pt,black!40,step=1] (24,0) grid (28,4);
 \draw[line width=.75pt] (24,0) rectangle (28,4);
 \draw[line width=.8pt] (24,2)--(28,2);
 \node at (26,-.75) {$4$ layers, $8$ bricks};
\end{tikzpicture}
\caption{All three minimum geometric scales occur. The top row shows
source footprints, each one cube high; the bottom shows a single brick
layer after enlargement. The counts include all height layers.}
\label{fig:scales}
\end{figure}

Doubling has a useful interpretation: $2P$ can be tiled by $2\times4$
rectangles if and only if $P$ can be tiled by dominoes. One direction
comes from doubling a domino tiling. For the reverse, every rectangle
in a tiling of $2P$ must align with the grid of enlarged $2\times2$
source squares. To see this, choose the lowest occupied unit square
in the leftmost occupied column. The rectangle covering it must have
its lower-left corner there, at a grid vertex. Both side lengths are
even, so this rectangle is a union of whole $2\times2$ grid squares.
Remove it and repeat. All rectangles align, and halving their coordinates
gives a domino tiling of $P$. In graph language, domino tilings are
perfect matchings of the source-cell adjacency graph: each domino pairs
two neighbouring cells, and each cell belongs to one pair.

The missing scale is explained by a stronger implication. A global
checkerboard count can rule out some tilings; applying it within each
enlarged cell forces the shape of a tiling at a coarser scale.

\begin{lemma}\label{lem:three}
If $3P$ can be tiled by $2\times4$ rectangles, then $P$ can be tiled
by $2\times2$ squares, and hence by dominoes.
\end{lemma}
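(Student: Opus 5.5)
The plan is to colour each enlarged $3\times3$ cell separately, as the remark before the lemma suggests. Use coordinates in which the source cells of $3P$ are the blocks $[3a,3a+3]\times[3b,3b+3]$. Give each unit square $[x,x+1]\times[y,y+1]$ the parities of $x\bmod 3$ and $y\bmod 3$, and colour it black when these parities agree. Inside every block this is an ordinary checkerboard with black corners, so every enlarged cell has five black and four white squares. The whole footprint $3P$ therefore has black excess exactly $V$, where $V$ now counts the cells of $P$.

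\emph{Step 1: excess of a single rectangle.} The column parities run periodically as even, odd, even, even, odd, even, $\dots$, and the row parities do the same. The colour is a product of a row sign and a column sign. Hence the excess of a rectangle is $(E_x-O_x)(E_y-O_y)$, where $E_x$ and $O_x$ count the even and odd columns it covers, and $E_y,O_y$ do the same for rows. Four consecutive columns give $E_x-O_x\in\{0,2\}$, and two consecutive rows give $E_y-O_y\in\{0,2\}$. So every $2\times4$ rectangle has excess $0$ or $4$. It has excess $4$ exactly when its short side straddles a block seam (residues $2,0$) and its long side does not start at residue $1$. Call such rectangles \emph{special}. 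Summing over the tiling, there are exactly $V/4$ special rectangles, and each of them carries its whole excess across a seam.

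\emph{Step 2: localise the excess.} I would refine the count block by block. Each enlarged cell must receive net excess $+1$ from the parts of rectangles inside it. A non-special rectangle can still distribute excess unevenly among the cells it meets, so I would tabulate the possible restrictions of a rectangle to a $3\times3$ block. There are few cases, because a rectangle meets at most two blocks in its short direction and at most two in its long direction. The aim is an identity showing that a block's $+1$ can only come from rectangles passing through one of its corners. Combined with Step 1, this should show that the special rectangles sit on the seam vertices. Each such rectangle serves the four blocks around one lattice vertex, so these blocks group into $2\times2$ clusters.

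\emph{Step 3: assemble the square tiling.} I would then show that these clusters partition the cells of $P$, using the extremal-corner induction from the doubling argument. Take the lowest occupied block in the leftmost occupied column of blocks; its lower-left corner square can only be covered in a few ways. Show that these force a $2\times2$ cluster of occupied source cells containing that block and filled by rectangles inside the cluster. Remove the cluster and repeat. Each $2\times2$ square splits into two dominoes, which gives the final clause.

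The main obstacle is Step 2, together with its use in Step 3. The global count in Step 1 alone only says that $V$ is divisible by $4$ (indeed by $8$, by area). Converting it into a rigid local structure requires the case analysis of how a $2\times4$ rectangle can meet a $3\times3$ block. I would try first to combine the colouring argument with the lower-left corner analysis. If that does not close, I would add a second colouring by residues modulo $3$ and a further extremal argument.
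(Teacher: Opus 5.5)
Your Step 1 is correct, and your colouring is well chosen: every $3\times3$ block has weight exactly $+1$. The gap is that the proof stops where the content of the lemma begins. Step 2 is left as an unperformed case analysis, and the expectation behind it is mistaken. In your colouring a non-special rectangle contributes $0$ to every block it meets, so nothing is distributed unevenly.

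The missing observation is that your product formula survives restriction to a block. Put $\epsilon(t)=+1$ for $t\equiv 0,2$ and $\epsilon(t)=-1$ for $t\equiv 1 \pmod 3$. The weight of a rectangle's intersection with a block is then $\bigl(\sum_{I}\epsilon\bigr)\bigl(\sum_{J}\epsilon\bigr)$, where $I$ and $J$ are the parts of its sides inside that block. A part of length $2$ inside one block covers residues $\{0,1\}$ or $\{1,2\}$, so it sums to $0$. A part of length $3$ sums to $+1$, and so does a part of length $1$, which always sits at residue $2$ or $0$ beside a seam. The short side splits as $2$ or $1+1$, and the long side as $3+1$, $2+2$ or $1+3$. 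Hence a non-special rectangle has an even part in every block it meets. A special rectangle contributes exactly $+1$ to each of the four blocks around its seam vertex.

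Since each block totals $+1$, each block meets exactly one special rectangle. That count alone partitions the blocks into the four-block groups of the special rectangles, all of them occupied because the rectangles lie in $3P$. This is the paper's proof. The paper uses the global checkerboard $(-1)^{x+y}$ and checks that each odd strip carries its block's sign.

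Step 3 should be dropped, and not only because it is unnecessary: the claim it relies on is false. A cluster need not be filled by rectangles lying inside it. Let $P$ be a $4\times2$ rectangle of cells, and tile $3P=[0,12]\times[0,6]$ by the nine bricks $[4i,4i+4]\times[2j,2j+2]$. The special bricks are $[0,4]\times[2,4]$ and $[8,12]\times[2,4]$, which give the clusters $[0,6]\times[0,6]$ and $[6,12]\times[0,6]$. Yet the non-special brick $[4,8]\times[0,2]$ crosses $x=6$. Removing a cluster therefore leaves a region that the remaining bricks do not tile, so your extremal induction cannot be repeated. The exactly-one count from the corrected Step 2 does the partitioning directly.
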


\begin{proof}
Give the fine unit square with lower-left corner $(x,y)$ checkerboard
weight $(-1)^{x+y}$. Each enlarged source square is a $3\times3$ block
with total weight either $+1$ or $-1$; call this its sign. We count the
contributions of the tiling rectangles to this block separately.

The intersection of a rectangle with a block has weight zero whenever
one of its side lengths is even. A tile's short side of length two
therefore contributes only when it crosses a block boundary as $1+1$.
Its long side of length four splits at block boundaries as $3+1$,
$2+2$, or $1+3$; only the first and last possibilities contribute.
Thus a contributing tile meets exactly four blocks in a $2\times2$
arrangement, cutting out a $1\times1$ or $1\times3$ strip, or its
rotation, in each (Figure~\ref{fig:checkerboard}).

Each of these odd strips has the same weight as the block it meets.
Indeed, its lower-left corner has local coordinates zero or two in
both directions within the block, so its first square has the block's
sign. An odd alternating row has that same total sign, as does an odd
column. Every other tile contributes zero to every block it meets.
If $n$ contributing tiles meet a given block, additivity of weight gives
\[
 \text{sign of the block}=n\,(\text{sign of the block}),
\]
so $n=1$. Every occupied block therefore belongs to exactly one group
of four determined by a contributing tile. All four blocks in each
group are occupied, since the tile lies inside $3P$. Shrinking these
groups back to the source grid partitions $P$ into $2\times2$ squares.
Each such square splits into two dominoes.
\end{proof}

\begin{figure}[htbp]
\centering
\begin{tikzpicture}[x=.53cm,y=.53cm,
 every node/.style={font=\fontsize{9}{11}\selectfont}]
 \foreach \x in {0,...,5}{
  \foreach \y in {0,...,5}{
   \pgfmathtruncatemacro{\parity}{mod(\x+\y,2)}
   \ifnum\parity=0
    \fill[black!12] (\x,\y) rectangle ++(1,1);
   \fi
  }
 }
 \fill[brickorange!35,opacity=.7] (2,2) rectangle (6,4);
 \draw[step=1,black!35,line width=.5pt] (0,0) grid (6,6);
 \draw[line width=.8pt] (0,0) rectangle (6,6);
 \draw[line width=.8pt] (3,0)--(3,6);
 \draw[line width=.8pt] (0,3)--(6,3);
 \draw[brickorange!80!black,line width=1.1pt] (2,2) rectangle (6,4);
 \node at (1.35,1.3) {$+1$};
 \node at (4.5,1.3) {$-1$};
 \node at (1.35,4.7) {$-1$};
 \node at (4.5,4.7) {$+1$};
 \node[above=4pt] at (3,6) {four $3\times3$ blocks};
 \draw[->,line width=.7pt] (7,3)--(9.1,3);
 \begin{scope}[shift={(10,2)}]
  \fill[brickblue!15] (0,0) rectangle (2,2);
  \draw[step=1,line width=.6pt] (0,0) grid (2,2);
  \draw[line width=.8pt] (0,0) rectangle (2,2);
  \node[below=4pt,align=center] at (1,0) {one group of\\four source cells};
 \end{scope}
\end{tikzpicture}
\caption{A contributing rectangle cuts off an odd strip in each of four
blocks. Each strip has the same checkerboard imbalance as its block
(the signs shown). Exactly one such rectangle meets each occupied block,
so the corresponding source cells partition into $2\times2$ squares.}
\label{fig:checkerboard}
\end{figure}

\begin{theorem}\label{thm:scales}
The minimum positive integer scale for a geometric tiling of any model
is $1$, $2$, or $4$. Scale $2$ works exactly when every horizontal
source footprint is domino-tileable.
\end{theorem}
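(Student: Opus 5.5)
We reduce everything to the layer-wise criterion established above: at integer scale $s$ the model tiles exactly when every scaled footprint $sP$ tiles by $2\times4$ rectangles in either orientation. The plan has three steps.

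\emph{Scale four always works.} This follows from Theorem~\ref{thm:fourfold}, or more simply from the fact that each enlarged cell of $4P$ is a $4\times4$ square and so holds two bricks. The minimum scale therefore lies in $\{1,2,3,4\}$.

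\emph{Scale three is never the minimum.} Suppose scale three works. Then Lemma~\ref{lem:three} applies to every footprint $P$ with $3P$ tileable, so each $P$ is tileable by dominoes. By the doubling equivalence stated before the lemma, each $2P$ is then tileable by $2\times4$ rectangles. Hence scale two also works, and three can never be the minimum.

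\emph{Characterisation of scale two.} Scale two works exactly when every $2P$ tiles. By the doubling equivalence, $2P$ tiles if and only if $P$ is domino-tileable. This is the second assertion of the theorem.

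\emph{Remaining points.} The minimum is at most four and never three, so it lies in $\{1,2,4\}$. For completeness, Figure~\ref{fig:scales} shows that each of these three values is attained:
\begin{itemize}
\item a $4\times2\times1$ block gives minimum $1$;
\item a $2\times1\times1$ block needs at least $2$, since a footprint of area $2$ cannot hold a brick of area $8$;
\item a single cube gives minimum $4$, since $P$ is one square and so not domino-tileable, ruling out scales $2$ and $3$, while scale $1$ fails by area.
\end{itemize}

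None of these steps is hard once Lemma~\ref{lem:three} and the doubling equivalence are granted. The only point to check carefully is that the reduction to layers is valid in both directions. Each brick has height one and lies in a single target layer. A source layer at scale $s$ contributes $s$ identical layers with footprint $sP$. So tileability of the model is equivalent to tileability of each $sP$ separately, and both implications above can be applied layer by layer.
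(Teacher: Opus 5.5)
Your proof is correct and follows the paper's argument. Scale four always works by the eight-brick filling. Lemma~\ref{lem:three} with the doubling equivalence shows that scale three forces scale two, the doubling equivalence characterises scale two, and Figure~\ref{fig:scales} realises each value. Your checks of the layer reduction and of why each example attains its minimum (area for scale one, non-domino-tileability for scales two and three) spell out what the paper leaves implicit.
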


\begin{proof}
The doubling argument characterises scale two. If scale three works,
Lemma~\ref{lem:three} makes every source footprint domino-tileable,
so scale two already works. Scale four always works: the eight-brick
filling of each enlarged cube needs no connectivity assumption.
The examples in Figure~\ref{fig:scales} realise each remaining value.
\end{proof}

Checkerboard arguments are a standard tool for tilings~\cite{Golomb1994}.
Here their local use does more than obstruct a tiling: it partitions the
source into larger squares. The connected construction uses a different
principle, assigning separate layers to separate directions of adjacency.
Together they leave a natural question: which models admit a
\emph{connected} reconstruction at scale one or two? The domino test
settles whether the pieces can fit at scale two, but the contacts between
successive layers still have to join them.

\smallskip
\noindent\textit{AI assistance.}
OpenAI Codex (GPT-5.6 Sol and GPT-6) assisted with proof exploration,
computational checks, literature searches, and drafting and revision.
Refine.ink provided feedback on an earlier draft. The author is
responsible for the mathematical arguments, their verification, and
the final text.

\smallskip
\noindent\textit{Trademark notice.}
LEGO is a trademark of the LEGO Group, which does not endorse this work.


\begin{thebibliography}{4}
\bibitem{DeBruijn1969}
N.~G. de~Bruijn, Filling boxes with bricks,
\emph{Amer. Math. Monthly} \textbf{76} (1969), 37--40.

\bibitem{Eilers2016}
S.~Eilers, The LEGO counting problem,
\emph{Amer. Math. Monthly} \textbf{123} (2016), 415--426.

\bibitem{Golomb1994}
S.~W. Golomb, \emph{Polyominoes: Puzzles, Patterns, Problems, and Packings},
2nd ed., Princeton University Press, Princeton, NJ, 1994.

\bibitem{TestuzSchwartzburgPauly2013}
R.~Testuz, Y.~Schwartzburg, and M.~Pauly,
Automatic generation of constructable brick sculptures,
in \emph{Eurographics 2013: Short Papers}, 2013, 81--84.
\end{thebibliography}
\end{document}